\newcounter{conjecture}\setcounter{conjecture}{1}
\newcounter{remark}\setcounter{remark}{1}
\newcounter{corollary}\setcounter{corollary}{1}
\newenvironment{corollary}{\medskip{\bf Corollary \thecorollary.}
\addtocounter{corollary}{1}\em}{\rm}
\newtheorem{theorem}{Theorem}
\newtheorem{lemma}{Lemma}
\newtheorem{proposition}{Proposition}
\def \be{\begin{equation}}
\def \ee{\end{equation}}
\def \bt{\begin{theorem}}
\def \et{\end{theorem}}
\def \bc{\begin{corollary}}
\def \ec{\end{corollary}}
\def \bea{\begin{eqnarray}}
\def \eea{\end{eqnarray}}
\def \bas{\begin{eqnarray*}}
\def \eas{\end{eqnarray*}}
\def \bl{\begin{lemma}}
\def \el{\end{lemma}}
\def \vski{\vspace{12pt}}
\def \ZZ{{\mathbb Z}}
\def \({\left(}
\def \){\right)}
\def \nn{\nonumber}
\def \bc{\begin{center} }
\def \ec{\end{center} }
\def \bs{\begin{slide} }
\def \es{\end{slide} }
\def\square{{\vcenter{\vbox{\hrule height.3pt
         \hbox{\vrule width.3pt height5pt \kern5pt
            \vrule width.3pt}
         \hrule height.3pt}}}}
\newcounter{cccases}
\begin{document}

\title{Remarks on the recurrence and transience of non-backtracking random walks}

\author{Paul Jung$^{\,\rm 1}$, Greg Markowsky$^{\,\rm 2}$\\
{\small {\tt pauljung@kaist.ac.kr} ~~ {\tt greg.markowsky@monash.edu}}\\
{\footnotesize{$^{\rm 1}$Department of Mathematics, KAIST, Republic of Korea}}\\
{\footnotesize{$^{\rm 2}$Department of Mathematics,  Monash University, Australia}}}




\maketitle


\begin{abstract}
A short proof of the equivalence of the recurrence of non-backtracking random walk and that of simple random walk on regular infinite graphs is given. It is then shown how this proof can be extended in certain cases where the graph in question is not regular.
\end{abstract}


\noindent MSC Codes: 05C81; 60G50. \\
Keywords: Non-backtracking random walk; {P}\'olya's theorem.

\vski
\vski

{\it Non-backtracking random walks} (abbr. NBRW) is a random process defined on the vertices of a graph $G$ by the transition probabilities
\begin{equation} \label{}
\begin{gathered}
P(X_1 = y|X_0=x) = \left \{ \begin{array}{ll}
\frac{1}{deg(x)} & \qquad  \mbox{if } y \sim x  \\
0 & \qquad \mbox{if } y \not \sim x\;,
\end{array} \right.\\
P(X_{n+1} = y|X_n=x_2,X_{n-1} = x_1) = \left \{ \begin{array}{ll}
\frac{1}{deg(x_2)-1} & \qquad  \mbox{if } y \sim x_2 \mbox{ and } y \neq x_1 \\
0 & \qquad \mbox{if } y \not \sim x_2 \mbox{ or } y=x_1\;.
\end{array} \right.
\end{gathered}
\end{equation}

\vski

Recently NBRWs  have received much attention in the scientific literature due to their connection to spectral methods for community detection, \cite{krzakala2013spectral, bordenave2018nonbacktracking, abbe2017community} and also because they mix faster than the corresponding simple random walks on various graphs $G$, \cite{alon2007non, lee2012beyond, kemp2}. They have also played a role in the spectral analysis of random matrices \cite{sodin2007random}.

\vski

Consideration of the NBRW on $\ZZ^d$ dates back to \cite[Section 5.3]{madras1996self} and the process was also analyzed in \cite{fitzner2013non} where the Green's function and functional central
limit theorem were studied.
In \cite{kemp}, the question of recurrence of non-backtracking random walk on the integer lattice was settled: the walk is transient on $\ZZ$ (trivially) and on $\ZZ^d$ with $d \geq 3$, and recurrent on $\ZZ^2$. Following this, in \cite[Prop 1.1]{hermon}, the following extension was proved.

\begin{theorem} \label{bigguy}
If $G$ is a regular infinite graph of degree $k \geq 3$, then non-backtracking random walk is recurrent on $G$ if, and only if, simple random walk is recurrent on $G$.
\end{theorem}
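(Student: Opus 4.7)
My plan is to compare the Green's functions of the non-backtracking walk and the simple random walk via a three-term recurrence for non-backtracking walk counts on a regular graph. Let $A$ be the adjacency matrix of $G$, and let $\tilde{A}_n(x,y)$ denote the number of non-backtracking walks of length $n$ from $x$ to $y$. An inclusion--exclusion argument on the last step of a walk yields, for $n\geq 2$,
\[
\tilde{A}_{n+1}=A\tilde{A}_n-(k-1)\tilde{A}_{n-1},
\]
with $\tilde{A}_0=I$, $\tilde{A}_1=A$ and $\tilde{A}_2=A^2-kI$. Packaging these into $F(z)=\sum_{n\geq 0}\tilde{A}_n z^n$, the recursion telescopes to the operator identity
\[
F(z)\bigl(I-zA+(k-1)z^2\,I\bigr)=(1-z^2)\,I,
\]
valid entrywise in extended reals since every coefficient on both sides is nonnegative.

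The identity is designed so that at $z=1/(k-1)$ the second factor collapses to $(kI-A)/(k-1)$; reading off the $(x,x)$ entry then gives
\[
F\!\left(\tfrac{1}{k-1}\right)(x,x)=\tfrac{k-2}{k-1}\,G_{\mathrm{SRW}}(x,x),
\]
where $G_{\mathrm{SRW}}(x,x):=\sum_{n\geq 0}k^{-n}A^n(x,x)$ is the SRW Green's function at $x$. Next I would translate this into expected visits for the NBRW. A specified non-backtracking walk of length $n\geq 1$ starting at $x$ with uniform first step is realized with probability $k^{-1}(k-1)^{-(n-1)}$, so $P(X_n=x\mid X_0=x)=\tilde{A}_n(x,x)/[k(k-1)^{n-1}]$. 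Summing over $n\geq 0$ and using $\tilde{A}_1(x,x)=0$ yields
\[
E_x\!\left[\#\{n\geq 0:X_n=x\}\right]=\tfrac{1}{k}+\tfrac{k-1}{k}\,F\!\left(\tfrac{1}{k-1}\right)(x,x)=\tfrac{1+(k-2)\,G_{\mathrm{SRW}}(x,x)}{k}.
\]
Because $k\geq 3$ the coefficient $k-2$ is strictly positive, so this expected visit count is infinite if and only if SRW is recurrent.

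To finish I would invoke the standard Markov chain dichotomy: NBRW is a genuine Markov chain on the directed edges of $G$, and its recurrence to $x$ is equivalent to the expected number of visits to $x$ being infinite (one direction by Borel--Cantelli, the other by the strong Markov property applied to excursions from the set $A_x=\{(u,x):u\sim x\}$). The step deserving the most care is this last equivalence, since the directed-edge chain need not be irreducible on an arbitrary $k$-regular graph --- on a $k$-regular tree, for instance, each directed edge forms its own transient class. However, when $G$ is a tree with $k\geq 3$ the SRW is already transient by the classical result, so both sides of the equivalence are trivially transient; and when $G$ contains a cycle, the cycle structure makes the directed-edge chain irreducible and the dichotomy applies directly. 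The essential content of the proof is the generating-function identity, which sidesteps spectral considerations and produces a remarkably clean comparison between the two Green's functions.
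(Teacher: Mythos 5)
Your route---comparing Green's functions through the non-backtracking walk-count recursion, essentially the cogrowth/Ihara-type identity---is genuinely different from the paper's proof, which couples NBRW to SRW by a backtrack-erasing algorithm whose motion is a transient birth--death chain. The recursion, the formal identity $F(z)\bigl(I-zA+(k-1)z^2 I\bigr)=(1-z^2)I$, and the target relation $F(\tfrac1{k-1})(x,x)=\tfrac{k-2}{k-1}G_{\mathrm{SRW}}(x,x)$ are all correct, but your justification at the critical point is not. The factor $I-zA+(k-1)z^2I$ has entries of both signs, so nonnegativity does not license multiplying matrices with possibly infinite entries at $z=\tfrac1{k-1}$; and literally ``reading off the $(x,x)$ entry'' of $F(\tfrac1{k-1})\,\tfrac{kI-A}{k-1}=\tfrac{k(k-2)}{(k-1)^2}I$ gives $kF(x,x)-\sum_{y\sim x}F(x,y)=\tfrac{k(k-2)}{k-1}$, not the claimed relation. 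Isolating $F(x,x)$ amounts to inverting $kI-A$, which is exactly the step that needs justification in the recurrent case where entries are infinite. The standard repair: for $0<z<\tfrac1{k-1}$ factor $I-zA+(k-1)z^2I=(1+(k-1)z^2)\bigl(I-\tfrac{w}{k}A\bigr)$ with $w=\tfrac{kz}{1+(k-1)z^2}<1$, verify $F(z)(x,x)=\tfrac{1-z^2}{1+(k-1)z^2}\sum_{n\ge0}(w/k)^nA^n(x,x)$ inside the radius of convergence where all series converge absolutely, and then let $z\uparrow\tfrac1{k-1}$ (so $w\uparrow1$) using monotone convergence for power series with nonnegative coefficients. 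That gap is real but repairable.

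The more serious gap is the final step. The claim that the presence of a cycle makes the directed-edge chain irreducible is false: take a triangle and hang from each corner an infinite tree in which the attachment vertex and all other tree vertices have degree $3$; the resulting graph is $3$-regular, connected, and contains a cycle, yet a directed edge pointing away from the triangle can never be reversed by a non-backtracking walk (non-backtracking walks in a tree branch never revisit a vertex), so the edge chain is not irreducible. In that example both walks happen to be transient, but your tree-versus-cycle dichotomy does not cover graphs with cycles and tree-like branches, and, more importantly, even when you can exhibit a recurrent directed edge into $x$ reachable from the start, infinite expected visits only yields that NBRW visits $x$ infinitely often \emph{with positive probability}. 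The theorem, as proved in the paper, is the almost-sure statement, and upgrading requires an additional argument---for instance showing that SRW-recurrence forces every directed edge reachable from $x$ to be reversible by a non-backtracking path (so the relevant states form a single recurrent communicating class), or some zero--one argument. As written, the proof is incomplete at exactly the step you flagged as delicate; by contrast, the paper's coupling argument delivers the almost-sure conclusions directly.
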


The result in \cite{kemp} was proved by a rather intricate counting argument, and in \cite{hermon} the universal cover of $G$ is used to prove Theorem \ref{bigguy} in amongst a larger work on reversibility. Each of these works contain a number of results of independent interest, however if we are only interested in Theorem \ref{bigguy} there is a very short and simple probabilistic coupling-type proof available, as we now present.

\vski

\begin{proof}
We begin with an algorithm which modifies deterministic sequences of the vertices of $G$. Let us suppose $(x_0,x_1,x_2, \ldots)$ is such a sequence. Our algorithm will move along the sequence and essentially remove the backtracking contained in the sequence. The algorithm is as follows:
\begin{enumerate}
	\item[1.] If we are at position 0, move to the right.
	\item[2.] If we are at position $n>0$, compare $x_{n-1}$ with $x_{n+1}$, and
	\item[2a.]  if $x_{n-1} \neq x_{n+1}$, then move to the right,
	\item[2b.] if $x_{n-1} = x_{n+1}$, then erase $x_n, x_{n+1}$ and shift the remaining part of the sequence two positions to the left to close the gap (so that portion now reads $(\ldots, x_{n-1}, x_{n+2}, \ldots)$). Next, move to the left.
	\item[3.] Repeat.
\end{enumerate}
The reason for moving left in step (2b) is to check whether the erasure has introduced a new backtracking. Note that, if our initial sequence is such that the algorithm eventually leaves any given finite subset of positions forever, then the output of this algorithm will be a sequence which contains no backtracking.

\vski

Let us apply this algorithm to simple random walk (abbr. SRW) on the vertices of $G$. If $(X_n)_n$ denotes SRW, then we may apply our algorithm to the random sequence $(X_0, X_1,X_2, \ldots)$. It is straightforward to verify that the regularity of $G$ implies that the output from the algorithm is NBRW, provided only that the algorithm eventually leaves any finite subset of positions forever. Examining the (now random) movements of the algorithm, notice that they have independent increments and that at any position $n>0$, there is probability $\frac{1}{k}$ of moving to the left and probability $\frac{k-1}{k}$ of moving to the right. These movements constitute a birth-death chain, which is well known to be transient in this case since $\frac{k-1}{k}>\frac{1}{2}$ (see for instance \cite{norris}).

\vski

It is now immediate that a graph $G$ transient for SRW is also transient for NBRW, since our algorithm can in no way turn finitely many visits to any vertex in SRW into infinitely many for NBRW. To see that a $G$ recurrent for SRW is also recurrent for NBRW is slightly more subtle. Fix a vertex $v$ and note that the event of a given visit to $v$ of SRW, say $X_n=v$, not being erased by our algorithm contains the event $E$ that our birth-death chain visits the `current position' of $X_n$ for the last time (the current position of $X_n$ takes into account any shifts caused by step (2b) of the algorithm up to time $n$, and thus the current position can be any $k\le n$, such that $k$ has the same parity as $n$). Note that $P(E)>0$. Consider now, successive visits to $v$ by $(X_n)_n$, and the associated events $\{E_{m_n}\}_n$ of the birth-death chain visiting the current positions for the last time. These events are independent, thus infinitely many visits to any vertex in $G$ for SRW must remain infinitely many in NBRW.
\end{proof}

{\bf Remarks} \begin{itemize} \label{}

\item It is satisfying to note the reason that the proof fails for $k=2$: the resulting birth-death chain produced by the algorithm is SRW on the integers, which is well known to be recurrent; therefore our algorithm fails and NBRW cannot be produced in this fashion.

\item The condition that $G$ is regular is necessary for the proof (although it can be weakened somewhat; see below). To see this, note that if $G$ is not regular then the following situation may arise. Let us suppose $v$ is a vertex of degree 3 adjacent to $x,y,$ and $z$. If NBRW reaches $v$ via $x$ then it should have equal probabilities of passing next to $y$ and $z$. However if the degree of $y$ is higher than that of $z$ then a passage of SRW from $v$ to $z$ is more likely to be erased by what follows than one from $v$ to $y$. When we apply our algorithm to SRW, then, our output is a random process without backtracking, but it is not equal in distribution to NBRW.
\end{itemize}

Incidentally, we may adjust the method of proof used in Theorem \ref{bigguy} in order to handle the following situation.

\begin{proposition} \label{bigguy2}
Suppose $G$ is an infinite graph such that every vertex has either degree $k_1$ or $k_2$, with $k_1 > k_2 \geq 2$. Suppose further that every vertex of degree $k_1$ is adjacent only to vertices of degree $k_2$, and every vertex of degree $k_2$ is adjacent only to vertices of degree $k_1$. Then non-backtracking random walk is recurrent on $G$ if, and only if, simple random walk is recurrent on $G$.
\end{proposition}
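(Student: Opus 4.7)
The plan is to apply the same algorithm used in the proof of Theorem \ref{bigguy} to SRW on $G$. The hypothesis forces $G$ to be bipartite with parts $V_1, V_2$ of degrees $k_1, k_2$, so every SRW trajectory alternates between $V_1$ and $V_2$, and the algorithm's erasures (which always remove two consecutive positions) preserve this alternation. The step I expect to require the most care is verifying that the algorithm output is NBRW on $G$, since $G$ is not regular. The remark following Theorem \ref{bigguy} identifies exactly the obstruction to this in the non-regular setting: an asymmetry arises when the neighbors of a vertex have differing degrees. Under our hypothesis this obstruction vanishes, because all neighbors of any vertex lie in the opposite part and therefore share a common degree. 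I would spell out that, at cursor position $n$ with current vertex $x_n \in V_i$, the next SRW step is uniform over the $k_i$ neighbors of $x_n$ and the probability that the step will eventually be erased (recursively, accounting for later cursor returns) does not depend on which neighbor was chosen. Hence the conditional distribution of the surviving vertex at position $n+1$ is uniform on the $k_i-1$ non-backtracking neighbors, matching the NBRW transition.

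Next I would analyze the cursor chain, which is now a random walk on $\mathbb{Z}_{\geq 0}$ (reflecting at $0$) with period-$2$ transition probabilities: because each cursor step flips the side of the current vertex, the cursor stands over a $V_1$-vertex or a $V_2$-vertex according to the parity of algorithm-time $t$, and the left-step probability alternates between $1/k_1$ and $1/k_2$. The expected two-step displacement is therefore
$$
\frac{k_1-2}{k_1} + \frac{k_2-2}{k_2} \;=\; 2 - \frac{2}{k_1} - \frac{2}{k_2}.
$$
The hypothesis $k_1 > k_2 \geq 2$ forces $k_1 \geq 3$, so $\tfrac{1}{k_1}+\tfrac{1}{k_2} \leq \tfrac{1}{3}+\tfrac{1}{2} < 1$, and this quantity is strictly positive. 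A law-of-large-numbers argument applied to the i.i.d.\ two-step increments then shows the cursor tends to $+\infty$ almost surely, so the algorithm eventually leaves every finite set of positions forever.

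The remainder of the proof copies the end of the proof of Theorem \ref{bigguy} unchanged: SRW-transience implies NBRW-transience because the algorithm can only remove visits; conversely SRW-recurrence implies NBRW-recurrence because, for any fixed vertex $v$, each visit $X_n = v$ survives the algorithm on the event that the cursor visits the current position of $X_n$ for the last time, which has positive probability by the transience of the cursor chain established above, and successive such events are independent.
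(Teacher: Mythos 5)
Your proposal is correct and takes essentially the same route as the paper: run the identical erasure algorithm, note that the degree-alternation hypothesis removes the obstruction described in the remark (all neighbors of the cursor vertex share a degree, so the output is genuinely NBRW) and that the cursor chain now moves left with probability alternating between $1/k_1$ and $1/k_2$, conclude that this chain is transient, and leave the rest of the argument unchanged. Your law-of-large-numbers justification of transience is fine once one observes that the deterministic right-step at position $0$ only pushes the cursor further right than the unreflected walk with i.i.d.\ two-step increments (or one can simply invoke the standard birth--death transience criterion, as the paper does by citing \cite{norris}).
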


The only adjustment to our previous proof is to note that the birth-death chain produced by our algorithm has two different probabilities; for instance, if we start at a vertex of degree $k_1$ then the resulting birth-death chain moves to the right with probability $\frac{k_1-1}{k_1}$ at even positions in the sequence, and $\frac{k_2-1}{k_2}$ at odd positions. Nevertheless, this birth-death chain is still easily seen to be transient by the methods in \cite{norris}, and the rest of the proof persists unchanged.

\vski

As a bit of an aside, we may also give a partial answer to an intriguing question posed in \cite{hermon}. Question 1.11 in that work is as follows:

\vski

{\it Let $G$ be a connected graph of bounded degree such that the length of any path of vertices of degree 2 is bounded by a finite constant $L > 0$. Is
it the case that the SRW on G is transient iff the NBRW on G is transient?}

\vski

We will show that the answer is in the affirmative provided that the vertices of the graph have only one possible degree other than 2. In other words, we have the following proposition.

\begin{proposition} \label{ziyi}
Let $G$ be a connected graph where there is a constant $k>2$ such that every vertex has either degree 2 or $k$. Suppose further that the length of any path of vertices of degree 2 is bounded by a finite constant $L > 0$. Then non-backtracking random walk is recurrent on $G$ if, and only if, simple random walk is recurrent on $G$.
\end{proposition}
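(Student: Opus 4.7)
The plan is to reduce Proposition~\ref{ziyi} to Theorem~\ref{bigguy} by contracting the degree-2 chains of $G$. Specifically, I would build an auxiliary multigraph $G'$ whose vertices are the degree-$k$ vertices of $G$, with one edge in $G'$ for each maximal path in $G$ from one degree-$k$ vertex to another whose internal vertices all have degree 2 (length-1 edges of $G$ joining two degree-$k$ vertices directly are included, corresponding to ``empty'' chains). By the bound $L$, every edge of $G'$ corresponds to a path in $G$ of length between $1$ and $L+1$. Since $G$ is connected and infinite and every degree-2 chain is finite, $G'$ is an infinite $k$-regular multigraph.

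The first step is to show that NBRW on $G$ and NBRW on $G'$ are simultaneously recurrent or transient. At a degree-2 vertex, NBRW has no choice, so once the walk on $G$ enters a chain of degree-2 vertices at one end it proceeds deterministically and exits at the other end. Hence NBRW on $G$, observed only at visits to degree-$k$ vertices, is precisely NBRW on the multigraph $G'$. Moreover each degree-2 vertex of $G$ is visited exactly once per traversal of its chain, so the number of visits to any degree-2 vertex is infinite iff the number of visits to either endpoint of that chain in $G'$ is; recurrence on the two scales therefore coincides.

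The second step is to argue, via electrical networks, that SRW on $G$ and SRW on $G'$ are simultaneously recurrent or transient. Series-reduction of each degree-2 chain turns the unit-conductance network on $G$ into a network on $G'$ in which each edge has resistance $\ell \in \{1,\ldots,L+1\}$. By Rayleigh monotonicity, the effective resistance from any vertex to infinity in this weighted $G'$ is within a factor of $L+1$ of that in the unit-conductance $G'$ (which corresponds to SRW on $G'$), so the two are recurrent or transient together. Applying Theorem~\ref{bigguy} to the $k$-regular multigraph $G'$ and chaining the three equivalences completes the argument.

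The main obstacle I anticipate is not the electrical-network step, which is classical, but rather the careful treatment of $G'$ as a multigraph: parallel edges and self-loops can occur when two degree-$k$ vertices are joined by several degree-2 chains, or when a chain returns to its origin. One must check that the coupling algorithm underlying Theorem~\ref{bigguy} carries over to this setting; reformulating it on the sequence of traversed \emph{edges} rather than the sequence of visited vertices should suffice, since the algorithm only uses regularity to equate the conditional laws after erasure.
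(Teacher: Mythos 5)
Your proposal is correct and follows essentially the same route as the paper: contract the degree-2 chains to obtain a $k$-regular graph $G'$, identify NBRW on $G$ watched at degree-$k$ vertices with NBRW on $G'$, compare SRW on $G$ and on $G'$ through the electrical network with resistances bounded by $L+1$, and invoke Theorem \ref{bigguy}. The only differences are cosmetic --- you package the SRW comparison as series reduction plus a bounded-resistance (Rayleigh-type) argument where the paper identifies the walk observed at stopping times with weighted random walk on $G'$ and cites Theorem 2.4.3 of Doyle--Snell --- and your explicit care with the multigraph issues (parallel chains, self-loops, and the edge-based reformulation of the coupling) addresses a point the paper leaves implicit.
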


\begin{proof}
We appeal to the theory of electric networks and their connections to random walk (\cite{snell} is one classic reference on this topic). Let $V$ denote the set of all vertices of $G$ of degree $k$, and let $G'$ be a graph with vertices $V$ and with two vertices adjacent if they are adjacent in $G$ or if there is a path of vertices of degree 2 connecting them in $G$. Assign to each edge in $G'$ a resistance of one if the edge exists in $G$ or a resistance equal to the length of the corresponding path of vertices of degree 2 in $G$, otherwise. Suppose we start a SRW $(X_n)_n$ on $G$ at a point in $V$ (recurrence and transience are not affected by the initial point). Define a sequence of stopping times by $\tau_0 =0$ and $$\tau_n = \min\{j > \tau_{n-1}: X_j \in V\}\quad  \text{ for } n \geq 1.$$ It then may be checked that the process $(X_{\tau_0}, X_{\tau_1}, X_{\tau_2}, \ldots)$ on $V$ is equal in distribution to weighted random walk (abbr. WRW) on $G'$, as defined in \cite{snell}. We will prove the following set of equivalences.

\begin{equation} \label{} \nn
\begin{split}
SRW & \mbox{ on $G$ is recurrent} \stackrel{(1)}{\Longleftrightarrow} WRW \mbox{ on $G'$ is recurrent} \stackrel{(2)}{\Longleftrightarrow} SRW \mbox{ on $G'$ is recurrent} \\
&\stackrel{(3)}{\Longleftrightarrow} NBRW \mbox{ on $G'$ is recurrent} \stackrel{(4)}{\Longleftrightarrow} NBRW \mbox{ on $G$ is recurrent}
\end{split}
\end{equation}

\begin{itemize} \label{}

\item[(1)] follows since $X_{\tau_n}$ is recurrent precisely when $X_n$ is.

\item[(2)] is immediate from Theorem 2.4.3 of \cite{snell}, which states that recurrence is equivalent for WRW and SRW (which can be realized as having all resistances set to 1) provided that we have a finite upper and lower bound on the resistances, as we do in this case due to the existence of the constant $L$.

\item[(3)] is immediate from Theorem \ref{bigguy}.

\item[(4)] follows by noting that if we let $X_n$ be NBRW on $G$ and define the stopping times $\tau_n$ as above, then the process $X_{\tau_0}, X_{\tau_1}, X_{\tau_2}, \ldots$ on $V$ is equal in distribution to NBRW on $G'$, as the process $X_n$ may not reverse directions on the paths of vertices of degree 2.

\end{itemize}

These equivalences complete the proof of the proposition.
\end{proof}

This method of proof can be further extended to the following situation, with Proposition \ref{bigguy2} taking the place of Theorem \ref{bigguy} where required. Details are left to the reader.

\begin{proposition} \label{}
Let $G$ be a connected graph where there are constants $2<k_1<k_2$ such that every vertex has degree $2, k_1$, or $k_2$. Suppose further that the length of any path of vertices of degree 2 is bounded by a finite constant $L > 0$, and that the graph $G'$ formed as in the proof of Proposition \ref{ziyi} is of the form required in Proposition \ref{bigguy2}. Then non-backtracking random walk is recurrent on $G$ if, and only if, simple random walk is recurrent on $G$.
\end{proposition}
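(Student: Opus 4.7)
The plan is to mimic the proof of Proposition \ref{ziyi} essentially verbatim, substituting Proposition \ref{bigguy2} for Theorem \ref{bigguy} at the one place it is used. First I would construct the auxiliary graph $G'$ exactly as before: let $V$ be the set of vertices of $G$ of degree $k_1$ or $k_2$, declare two vertices of $V$ adjacent in $G'$ if they are either adjacent in $G$ or joined by a path of degree-$2$ vertices in $G$, and assign each edge of $G'$ a resistance equal to the length of the corresponding path in $G$ (which is $1$, or at most $L$). By hypothesis $G'$ has the bipartite-type degree structure required by Proposition \ref{bigguy2}.

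Next I would reproduce the chain of equivalences displayed in the proof of Proposition \ref{ziyi}. The equivalence between recurrence of SRW on $G$ and recurrence of the induced walk on $V$ at the stopping times $\tau_n=\min\{j>\tau_{n-1}:X_j\in V\}$ is a standard subsequence argument and gives step (1). Step (2), the equivalence of WRW on $G'$ with SRW on $G'$, follows from Theorem 2.4.3 of \cite{snell} because the resistances lie in $[1,L]$. Step (4), the equivalence between NBRW on $G$ and NBRW on $G'$ viewed at the stopping times $\tau_n$, is again the observation that NBRW on $G$ cannot reverse direction while traversing a path of degree-$2$ vertices, so such a path is traversed deterministically between its endpoints in $V$.

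The only new step is (3): the equivalence of SRW and NBRW on $G'$. Here I would invoke Proposition \ref{bigguy2} directly, since by assumption $G'$ has only vertices of degrees $k_1$ and $k_2$ with $k_1>k_2\ge 3>2$, and satisfies the bipartite adjacency condition of that proposition. Concatenating the four equivalences yields the desired equivalence between recurrence of SRW on $G$ and recurrence of NBRW on $G$.

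I do not expect a serious obstacle: the degree-$2$ paths are handled entirely by the electric-network reduction (bounded above by $L$ and below by $1$), and the bipartite-type degree condition on $G'$ is exactly what was cooked up to allow Proposition \ref{bigguy2} to apply. The only place one has to be slightly careful is in step (4), to check that the induced walk on $V$ of the NBRW on $G$ is genuinely NBRW on $G'$ rather than some weighted variant, but this is immediate from the fact that upon entering a degree-$2$ path the NBRW is forced to exit at the other end, so the transition probabilities at a vertex $v\in V$ of $G'$ are exactly the NBRW transition probabilities determined by the last edge of $V$-to-$V$ used.
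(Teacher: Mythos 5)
Your proposal is correct and follows exactly the route the paper intends: repeat the proof of Proposition \ref{ziyi} with the same electric-network reduction to $G'$ and the same chain of four equivalences, replacing Theorem \ref{bigguy} by Proposition \ref{bigguy2} at step (3), which the hypothesis on $G'$ is designed to permit. The paper itself leaves these details to the reader, so your write-up is essentially the intended argument.
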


\section*{Acknowledgments}

P. Jung was supported in part by the (South Korean) National Research Foundation grant N01170220.

\bibliographystyle{alpha}
\bibliography{CAbib}

\newcommand{\etalchar}[1]{$^{#1}$}
\begin{thebibliography}{KMM{\etalchar{+}}13}

\bibitem[Abb17]{abbe2017community}
E.~Abbe.
\newblock Community detection and stochastic block models: recent developments.
\newblock {\em The Journal of Machine Learning Research}, 18(1):6446--6531,
  2017.

\bibitem[ABLS07]{alon2007non}
N.~Alon, I.~Benjamini, E.~Lubetzky, and S.~Sodin.
\newblock Non-backtracking random walks mix faster.
\newblock {\em Communications in Contemporary Mathematics}, 9(04):585--603,
  2007.

\bibitem[BLM18]{bordenave2018nonbacktracking}
C.~Bordenave, M.~Lelarge, and L.~Massouli{\'e}.
\newblock Nonbacktracking spectrum of random graphs: Community detection and
  nonregular {R}amanujan graphs.
\newblock {\em The Annals of Probability}, 46(1):1--71, 2018.

\bibitem[DS84]{snell}
P.G. Doyle and E.J. Snell.
\newblock {\em {Random Walks and Electric Networks}}.
\newblock Mathematical Association of America, 1984.

\bibitem[FvdH13]{fitzner2013non}
R.~Fitzner and R.~van~der Hofstad.
\newblock Non-backtracking random walk.
\newblock {\em Journal of Statistical Physics}, 150(2):264--284, 2013.

\bibitem[Her]{hermon}
J.~Hermon.
\newblock {\em Reversibility of the non-backtracking random walk}.
\newblock arXiv:1707.01601v2.

\bibitem[Kem16]{kemp2}
M.~Kempton.
\newblock Non-backtracking random walks and a weighted {I}hara's theorem.
\newblock {\em Open Journal of Discrete Mathematics}, 6(4):207–--226, 2016.

\bibitem[Kem18]{kemp}
M.~Kempton.
\newblock A non-backtracking {P}\'olya’s theorem.
\newblock {\em Journal of Combinatorics}, 9(2):327–--343, 2018.

\bibitem[KMM{\etalchar{+}}13]{krzakala2013spectral}
F.~Krzakala, C.~Moore, E.~Mossel, J.~Neeman, A.~Sly, L.~Zdeborov{\'a}, and
  P.~Zhang.
\newblock Spectral redemption in clustering sparse networks.
\newblock {\em Proceedings of the National Academy of Sciences},
  110(52):20935--20940, 2013.

\bibitem[LXE12]{lee2012beyond}
C.~Lee, X.~Xu, and D.~Eun.
\newblock Beyond random walk and metropolis-hastings samplers: why you should
  not backtrack for unbiased graph sampling.
\newblock In {\em ACM SIGMETRICS Performance evaluation review}, volume~40,
  pages 319--330. ACM, 2012.

\bibitem[MS96]{madras1996self}
N.~Madras and G.~Slade.
\newblock {\em The self-avoiding walk}.
\newblock Birkhauser, 1996.

\bibitem[Nor98]{norris}
J.~Norris.
\newblock {\em Markov chains}.
\newblock Number~2. Cambridge University Press, 1998.

\bibitem[Sod07]{sodin2007random}
S.~Sodin.
\newblock Random matrices, nonbacktracking walks, and orthogonal polynomials.
\newblock {\em Journal of Mathematical Physics}, 48(12):123503, 2007.

\end{thebibliography}

\end{document}